\definecolor{citegreen}{rgb}{0,0.8,0}
\definecolor{refred}{rgb}{0.8,0,0}
\newtheorem{theorem}{Theorem}
\newtheorem{lemma}[theorem]{Lemma}
\newtheorem{corollary}[theorem]{Corollary}
\theoremstyle{definition}
\newtheorem{definition}[theorem]{Definition}
\newcommand{\R}{\mathbb R}\newcommand{\M}{\mathcal M}\newcommand{\N}{\mathcal N}
\numberwithin{equation}{section} \numberwithin{theorem}{section}
\DeclareMathOperator{\Div}{div}
\newcommand{\e}{\varepsilon}
\newcommand{\set}[1]{\{ {#1} \}}
\newcommand{\norm}[1]{\| {#1} \|}
\title[Subcritical Sacks-Uhlenbeck]{Sacks-Uhlenbeck type regularity for subcritical generalized $p$-harmonic maps into Homogeneous targets}
\author{Gianmichele Di Matteo, Tobias Lamm}
\address{Gianmichele Di Matteo, Scuola Superiore Meridionale, Largo San Marcellino 10, 80138 Napoli, Italy}
\address{Tobias Lamm, Karlsruhe Institute of Technology (KIT), Englerstrasse 2, 76131 Karlsruhe, Germany}
\begin{document}

\begin{abstract}
Adapting \cite{strz3}, we define generalized $p$-harmonic maps into Riemannian homogeneous targets, a notion of solutions not belonging to the energy space. Restricting our attention to the subcritical range $p$ greater than the domain dimension $n$, we show a uniform $C^{1,\alpha}$-regularity result for a sequence of such maps in the limit $p \searrow n$, assuming a uniform $n$-energy bound on its elements. The method of the proof follows the exact same lines as in \cite{strz3} but we need to check uniformity of estimates not previously considered there.
\end{abstract}

\maketitle

\begin{center}
\textbf{2020 Mathematics Subject Classification:}  35-XX, 49-XX, 53-XX.\\
\textbf{Keywords}: $p$-harmonic maps. 
\end{center}

\section{Introduction}
Given $p \in (1,+\infty)$, two Riemannian manifolds $(\M^n,g)$ and $(\N^k,h)$ and a map $u:\M \rightarrow \N$, we define its $p$-energy $D_p$ as 
\begin{equation*}
   D_p(u):= D_p(u;\M):=\tfrac{1}{p} \int_M |d u|^p d\mu_g, 
\end{equation*}
where $d\mu_g$ is the volume element of $g$, and $|d u|$ is the norm of $d u$ seen as a section of $T^*\M \otimes u^* T^*\N$. Critical points of the energy $D_p$ are distinguished in different classes: a critical point with respect to outer variations is called a \textit{weakly $p$-harmonic map}; if a map is critical also with respect to inner variations, it is called a \textit{stationary $p$-harmonic map}; finally, a map $u$ is called a \textit{locally minimizing $p$-harmonic map} if for any compact set $K\subset \subset \M$, $D_p(u)\le D_p(v)$ for all maps $v:\M \rightarrow \N$ coinciding with $u$ outside $K$. The Euler-Lagrange system of weakly $p$-harmonic maps is the following
\begin{equation}\label{eq.weaklypharmonic}
-\Div^g [|\nabla^g u|^{p-2} \nabla^g u]=|\nabla^g u|^{p-2} A_u(\nabla^g u, \nabla^g u),
\end{equation}
where $A_u$ is the second fundamental form of the target $(\N,h) \subset (\R^K,g_{euc})$, embedded isometrically via Nash's theorem.
The regularity of a $p$-harmonic map depends on its class; before listing some results, it is important to recall the definition of Riemannian homogeneous space. A \emph{closed Riemannian homogeneous space} $(\N,h)$ consists of a compact quotient of Lie groups $G/H$, where $G$ is connected and $H$ is a closed subgroup of it, endowed with a left invariant metric $h$.

Let us shortly summarize the literature regarding the regularity of $p$-harmonic maps. Firstly, in the case $p=2$, a partial regularity theory has been developed for locally minimizing maps \cite{sch0,sch1} and stationary harmonic maps \cite{cha0,bet,eva1,lin3,riv0}. Weakly harmonic maps are smooth when defined on $2$-dimensional surfaces \cite{hel2,hel3,riv1}, however they do not enjoy any partial regularity for higher dimensional domains \cite{riv3}.

For general $p$, the situation is more complicated and a full regularity theory is still not available. The partial regularity of maps locally minimizing $D_p$ was proven in \cite{har1,fuc0,luc1}; the regularity proven is $C^{1,\alpha}$, which is the best regularity to be expected in degenerate or singular elliptic problems (see for example \cite{iwa1}). For what regards stationary $p$-harmonic maps, the authors in \cite{mou0,strz0,tak0}, showed their partial regularity assuming that the target is a round sphere, all of them relying on a compensated compactness argument. A refinement of the same method, led Toro and Wang in \cite{tor} to extend this regularity to maps valued in some Riemannian homogeneous manifold $(\N,h)$. Finally, for what regards arbitrary targets $(\N,h)$, the conjectured partial regularity is still to be proven. Striking results in this direction, although partial, were obtained by Miskiewicz-Petraszczuk-Strzelecki in \cite{mis}, Riviere-Strzelecki in \cite{riv} and Martino-Schikorra in \cite{mar}, the latter two being the sharpest and most recent results. The central focus of this work is the study of the regularity properties of weakly $p$-harmonic maps in the subcritical regime $p>n$. As already remarked in our previous paper \cite{dim} Sobolev's embedding guarantees an initial a-priori H\"older continuity for these maps, which combined with the uniqueness result of Fardoun-Regbaoui \cite{far}, allows to deduce the locally minimizing property and hence a full $C^{1,\alpha}$-regularity on a small enough scale; however, this scale is degenerating to zero as $p\searrow n$, motivating the need of a more careful analysis. Such a regularity result originated in Sacks-Uhlenbeck's work \cite{sac}, Main Estimate $3.2$ there, where the authors address similar estimates for the so-called $\alpha$-harmonic maps. Furthermore, we are going to focus on an even weaker notion of solution, not belonging to the energy space, for which no Sobolev embedding can be applied.

The main advantage in assuming the target to be Riemannian homogeneous to the aim of proving such regularity results as in \cite{tor}, is that the Euler-Lagrange system of weakly $p$-harmonic maps into these targets can be equivalently expressed as a conservation law, as we shortly describe in the following.
\begin{definition}[Helein Fields]\label{def.helein}
Let $(G/H,h)$ be a Riemannian homogeneous space, $\mathfrak{g}$ denote the Lie algebra of $G$, and $\ell:=dim(\mathfrak{g})$ be its dimension. We call two $\ell$-tuples of smooth tangent vector fields $(X_1,...,X_\ell)$ and $(Y_1,...,Y_\ell)$ on $G/H$ \textit{Helein's fields} associated to $(G/H,h)$, provided the $X_a$'s are Killing with respect to $h$, and these vectors generate in the following sense: for any tangent vector field $V$ one can decompose it as follows
\begin{equation}\label{eq.helein}
V=\sum_{a=1}^\ell h(V,X_a)Y_a.
\end{equation}
\end{definition}
Helein in \cite{hel4}, Lemma $2.2$, proved that any Riemannian homogeneous manifold $(G/H,h)$ admits Helein's fields, in view of the transitivity of the Lie algebra action. They are not unique in general. Consider now a weakly $p$-harmonic map $u$, that is a solution of \eqref{eq.weaklypharmonic}. If $X$ is a smooth Killing tangent vector field on $(\N,h)$, then the vector field $W:=|\nabla^g u|^{p-2} h(\nabla^g u,X(u)) \in \Gamma(T \M)$ is $g$-divergence free in the distributional sense, see $(2.1)$ in \cite{tor}. They combine these identities to rewrite equation \eqref{eq.weaklypharmonic} as follows
\begin{equation}\label{eq.weakhomogeneous}
-\Div^g[|\nabla^g u|^{p-2} \nabla^g u^\alpha]=\sum_{a=1}^\ell g(W_a,\nabla^g [Y_a^\alpha(u)]) \quad \forall \alpha=1,...,K, \quad W_a:=|\nabla^g u|^{p-2} h(\nabla^g u,X_a(u)).
\end{equation}
Thanks to this new formulation, Toro and Wang in \cite{tor} used compensated compactness methods to prove the partial regularity claimed above, combining Hardy-BMO's duality \cite{fef} with classical result from \cite{coi0}.

In \cite{strz3}, Strzelecki noticed that, for spherical-targeted $p$-harmonic maps, distributional solutions to the formulation as conservation law
\begin{equation*}
\int_{\M} |\nabla^g u|^{p-2} (u^\alpha \nabla^g u^\beta  -u^\beta \nabla^g u^\alpha) \nabla^g \zeta =0, \quad \forall \zeta \in C^{\infty}_c(\M), \quad \forall \alpha,\beta=1,...,k+1,
\end{equation*}
can be defined assuming only the lower integrability $u \in W^{1,p-1}(\M;S^k)$. Similarly, we define
\begin{definition}
Suppose $(\N,h)$ is a Riemannian homogeneous space, and let $(X_1,...,X_\ell)$ and $(Y_1,...,Y_\ell)$ be some Helein's fields associated to it. We say that a map $u \in W^{1,p-1}(\M;\N)$ is a \textit{generalized $p$-harmonic map} if it satisfies for all $a=1,...,\ell$
\begin{equation}\label{eq.generalizedhomogeneous}
\int_{\M} |\nabla^g u|^{p-2} h(\nabla^g u,X_a(u)) \nabla^g \zeta =0, \quad \forall \zeta \in C^{\infty}_c(\M).
\end{equation}
\end{definition}
Notice that this definition is well posed as the $X_a$'s are smooth and bounded, hence the integral is well defined.

Theorem $1.5$ in \cite{strz3} shows $C^{1,\alpha}$-regularity of generalized $p$-harmonic maps into spheres under BMO-norm smallness and assuming integrability of the gradient $|\nabla^g u| \in \mathbb{L}^q$ with respect to an exponent $q \in (p-1,p)$ close enough to $p \le n$. We can now state our main theorem covering the complementary subcritical case:
\begin{theorem}\label{th.SUregularity}
There exist constants $P_0:=P_0(n,\M,g,\N,h) >n$ and $\alpha_0:=\alpha_0(n,\M,g,\N,h), \ \e_0:=\e_0(n,\M,g,\N,h) \in (0,1)$ and such that if $(u_p)_{(n,P_0)}\in W^{1,n}(B(x_0,4R_0);\N)$ is a family of generalized $p$-harmonic maps defined on a ball $B(x_0,4R_0) \subset \M$, such that for all $p \in (n,P_0)$ we have the uniform energy bound $D_n(u_p;B(x_0,4R_0)) \le \e_0^n$, then $u_p \in W_{loc}^{1,P_0}(B(x_0,4R_0);\N)$ solve \eqref{eq.weaklypharmonic}, and $u_p \in C_{loc}^{1,\alpha_0}(B(x_0,4R_0);\N)$ for all $p \in (n,P_0)$, along with locally uniform bounds.
\end{theorem}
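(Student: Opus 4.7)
My plan is to follow the compensated-compactness strategy of Strzelecki \cite{strz3} almost verbatim, while carefully tracking the dependence of every constant on the exponent $p$ to ensure the bounds survive the limit $p \searrow n$. The starting observation is that the uniform $n$-energy smallness $D_n(u_p;B(x_0,4R_0)) \le \e_0^n$, combined with Poincar\'e's inequality, yields BMO-smallness of $u_p$ on every subball of radius comparable to $R_0$, with a constant depending only on $n$ and the geometry of $\M$, and \emph{not} on $p$. This is the uniform substitute for the BMO smallness hypothesis of Theorem~$1.5$ in \cite{strz3}.

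Next I would exploit the conservation-law formulation. Equation \eqref{eq.generalizedhomogeneous} says that the vector fields $W_a^p := |\nabla^g u_p|^{p-2} h(\nabla^g u_p, X_a(u_p))$ are distributionally divergence-free, hence Hodge-decomposable as a curl plus a harmonic piece on each coordinate ball. Testing the conservation law against cutoff-modified versions of $Y_a(u_p)$ and using the identity
\begin{equation*}
|\nabla^g u_p|^2 = \sum_{a=1}^\ell h(\nabla^g u_p, X_a(u_p)) \, h(\nabla^g u_p, Y_a(u_p)),
\end{equation*}
which follows immediately from \eqref{eq.helein}, recasts $\int \phi^p |\nabla^g u_p|^p$ as a sum of div-curl pairings. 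The Coifman-Lions-Meyer-Semmes lemma places these in the Hardy space $\mathcal{H}^1$, and Fefferman's $\mathcal{H}^1$-BMO duality together with the BMO smallness above absorbs them into a fraction of the left-hand side. Hole-filling on the remaining boundary terms produces a Morrey-type decay $\int_{B_r} |\nabla^g u_p|^p \le C r^{n-p+\sigma}$ uniformly in $p$, and Morrey's embedding then yields uniform $C^{0,\alpha_0}$ bounds for $u_p$.

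Once uniform H\"older continuity is secured, a Gehring-type self-improvement of the reverse H\"older inequality implicit in the Caccioppoli estimate promotes $u_p$ to $W^{1,P_0}_{loc}$, so that $u_p$ becomes a genuine weak solution of \eqref{eq.weaklypharmonic}; the $C^{1,\alpha_0}$ conclusion with locally uniform bounds then follows from classical regularity for degenerate quasilinear systems, as already invoked in \cite{dim}. The main obstacle, and the genuine novelty with respect to \cite{strz3}, is precisely the $p$-uniformity of the constants appearing in the Hodge decomposition, in Fefferman's duality, and in the Coifman-Lions-Meyer-Semmes estimate: each of these reduces to a Calder\'on-Zygmund operator norm whose dependence on $p$ must be shown to remain bounded on the compact interval $[n, P_0]$, so that the absorption arguments are not spoiled as $p \searrow n$.
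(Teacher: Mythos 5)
Your outline has a genuine gap at its central step. You propose to test the conservation law with cutoff versions of $Y_a(u_p)$ and, via \eqref{eq.helein}, to ``recast $\int \phi^p |\nabla u_p|^p$ as a sum of div-curl pairings'', eventually aiming at a Morrey decay for $\int_{B_r}|\nabla u_p|^p$ and a Caccioppoli/Gehring self-improvement. But the maps here are only assumed to lie in $W^{1,n}$ with $p>n$: they are \emph{not} in the energy space, and the $p$-energy is a priori not even finite (this is exactly the phenomenon exhibited by the Colombo--Tione counterexamples \cite{col} recalled in the introduction, and the reason a Caccioppoli inequality cannot be bootstrapped from H\"older continuity alone, as remarked after Theorem \ref{th.highsubcritical}). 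Concretely, pairing $|\nabla u_p|^{p-2}h(\nabla u_p,X_a(u_p))\in \mathbb{L}^{n/(p-1)}$ with $\nabla(Y_a(u_p))\in\mathbb{L}^{n}$ is not admissible, since $\tfrac{p-1}{n}+\tfrac1n=\tfrac pn>1$; the quantity your scheme is built around is not defined, and no amount of uniformity of Calder\'on--Zygmund constants repairs this.

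The missing idea is the one the paper (following \cite{strz3}) actually uses: the \emph{nonlinear} Hodge decomposition with negative exponent. One decomposes $|\nabla \tilde u|^{\e}\nabla\tilde u=\nabla v+H$ with $\e=n-p$ and $r=n$ (Theorem \ref{th.hodge}), where the Iwaniec--Sbordone stability estimate gives $\|H\|_{n/(n-p+1)}\le C(n)|p-n|\,\|\nabla u\|_{n}^{\,n-p+1}$, and then tests \eqref{eq.veryweakhomogeneous} with a cutoff of $v$, \emph{not} with $Y_a(u_p)$. The point is that the main term produced on the left is $|\nabla u|^{p-2}|\nabla\tilde u|^{2+\e}$, whose exponents sum to $n$, so the whole argument (CLMS div-curl estimate, Fefferman duality against the BMO-small fields $\tilde Y_a$, hole-filling) runs entirely at the level of the finite $n$-energy $D_n$, with the error from $H$ absorbed because it carries the factor $|p-n|$. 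The decay and the resulting higher integrability $W^{1,P_0}_{loc}$ are then obtained for $D_n$, after which $u_p$ does lie in the energy space locally, solves \eqref{eq.weaklypharmonic} in the usual weak sense, and the $C^{1,\alpha_0}$ conclusion follows from \cite{tor}. Your closing paragraph gestures at ``uniformity of the Hodge decomposition'', but in your outline the only decomposition mentioned is of the divergence-free fields $W_a^p$, which is not the relevant one; without the $|\nabla\tilde u|^{n-p}\nabla\tilde u$ decomposition and its $|p-n|$-small stability estimate, the proof does not get off the ground.
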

Notice that in our case both the BMO-smallness and the integrability condition of Theorem $1.5$ in \cite{strz3} can be solely expressed in terms of the $\mathbb{L}^n$-integrability. As a consequence, globally defined generalized $p$-harmonic maps for $p \in (n,P_0)$ must be $C^{1,\alpha_0}$ on the entire domain manifold $\M$, see Corollary \ref{cor.global}.

The argument follows closely the one in \cite{strz3}, relying on Iwaniec-Sbordone's stability estimate for a non-linear Hodge decomposition \cite{iwa3,iwa2}.
The main novel difficulty is to keep track of the uniformity of the estimates for $p\in (n,P_0)$.

Let us explicitely remark that we cannot reach the value of the threshold $P_0=n+1$, which would be a generalization of Iwaniec-Sbordone conjecture in \cite{iwa2}. Indeed, Colombo-Tione in \cite{col}, see Theorem $1.2$ there, construct strikingly counterexamples to this conjecture with trivial target $\N=\mathbb{R}$, verifying even one-side bounds on directional derivatives. In short, they constructed on an open ball $\Omega \subset \R^2$ and for all $p \in (1,+\infty)$, distributional solutions to
\begin{equation*}
\Div[|\nabla u|^{p-2} \nabla u]=0, \quad \text{on } \Omega,
\end{equation*}
which belong to $W^{1,p-1+\e}(\Omega)$, for some $\e=\e(p)>0$, and they can even assume $u \in C^{0,\alpha}(\Bar{\Omega})$, which however do not belong to the energy space:
\begin{equation*}
\int_\Omega |\nabla u|^p=+\infty.
\end{equation*}
In conclusion, let us remark that the same method of proof allows to prove, for any fixed $\underline{P}>n$, the uniform regularity of generalized $p$-harmonic maps $u_p$ for $p$ in a short enough interval $(\underline{P},\overline{P})$, assuming they satisfy uniformly the smallness energy bound $ R_0^{\underline{P}-n} D_{\underline{P} }(u_p,B(x_0,2R_0)) \le \e_0^{\underline{P} }$, see Theorem \ref{th.highsubcritical}.
\subsection{Structure of the paper}
In Section \ref{sec.preliminary} we discuss the main preliminary material needed to prove the main Theorem. In Section \ref{sec.regularity} we prove the main regularity Theorem \ref{th.SUregularity}.
\subsection{Acknowledgements}
The first named author has been partially supported by the PRIN Project 2022AKNSE4 \emph{Variational and Analytical aspects of Geometric PDEs} during the writing of this paper.

\section{Preliminary}\label{sec.preliminary}
In this Section we gather the preliminary material we will need to prove our main Theorem \ref{th.SUregularity}. We follow closely \cite{strz3} paying careful attention to the uniformity of the estimates as $p \searrow n$.

To start, let us recall the Poincar\'e-Wirtinger inequality $(7.45)$ in \cite{gil} applied to the case of $S=\Omega=B(x_0,R_0)$: for all $q \in [1,+\infty)$ and for all $u \in W^{1,q}(B(x_0,R_0))$ we have
\begin{equation}\label{eq.poincare}
\norm{ u-[u]_{B(x_0,R_0)} }_q \le 2^n R_0 \norm{\nabla u}_q.
\end{equation}
\subsection{Nonlinear Hodge decomposition and Iwaniec-Sbordone's stability estimate}
The standard Hodge decomposition consists in splitting a given vector field $X$ as a sum of a gradient vector field $\nabla v$ and a divergence free vector field $H$. It is quite natural to ask whether the elements of this decomposition satisfy some bounds in suitable functional spaces. The easiest example is when $X \in \mathbb{L}^2$, in which case the norm bounds follow by the continuity of the projection into the subspace of gradient vector fields. The case $X \in \mathbb{L}^p$ was obtained by Iwaniec-Martin in \cite{iwa4}, then refined and applied to quasiconformal and quasiregular mappings by T. Iwaniec \cite{iwa3}, and to weak minima by Iwaniec-Sbordone \cite{iwa2}. The result of most interest for us is Theorem $3$ in \cite{iwa2}, a refinement of Theorem $8.1$ in \cite{iwa3}, which expresses quantitatively the following heuristics. For $\e \sim 0$ we can expect a vector field $X$ of the form $X=|\nabla w|^{\e} \nabla w$ to be close to a gradient, meaning that its Hodge divergence-free component should be small. The statement we are going to apply is a global version of Theorem $3$ in \cite{iwa2}.
\begin{theorem}[Theorem $2.1$ in \cite{strz3}]\label{th.hodge}
Let $r \in (1,+\infty)$, $w \in W^{1,r}(\R^n)$ and $\e \in (-1,r-1)$. Then there exist $v \in W^{1,\frac{r}{1+\e}}(\R^n)$ and $H \in \mathbb{L}^{\frac{r}{1+\e}}(\R^n;\R^n)$ such that $|\nabla w|^{\e} \nabla w=\nabla v+H$, and $H$ is divergence-free in the distributional sense. Moreover, the following stability estimate holds
\begin{equation}
\norm{H}_{\frac{r}{1+\e}} \le C(r) |\e| \norm{\nabla w}_r^{1+\e}.
\end{equation}
Finally, if $1<s_1<s_2<+\infty$ are two fixed numbers such that $r,\tfrac{r}{1+\e} \in (s_1,s_2)$, then
\begin{equation}
C(r) \le \tfrac{2 r (s_2-s_1)}{(r-s_1)(s_2-r)}(A(s_1)+A(s_2)),
\end{equation}
where $A(s)$ is the norm of the operator $Id+(\mathcal{R}_{ij}):\mathbb{L}^s(\R^n;\R^n) \longrightarrow \mathbb{L}^s(\R^n;\R^n)$, where $\mathcal{R}_{ij}=\mathcal{R}_{i} \circ \mathcal{R}_{j}$ the second order Riesz transform with respect to indices $i,j \in \set{1,...,n}$.
\end{theorem}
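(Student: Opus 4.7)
The plan is to combine a standard linear Hodge decomposition with a complex interpolation argument exploiting the fact that the divergence-free projection of a pure gradient vanishes identically.

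First, I would construct the decomposition by solving the Poisson equation $\Delta v = \Div(|\nabla w|^\e \nabla w)$ in $\R^n$ via convolution with the Newtonian potential, and setting $H := |\nabla w|^\e \nabla w - \nabla v$; then $\Div H = 0$ distributionally by construction. For the integrability, observe that $\norm{|\nabla w|^\e \nabla w}_{r/(1+\e)} = \norm{\nabla w}_r^{1+\e}$, and that the Hodge projection $P := Id - \nabla \Delta^{-1} \Div$, whose Fourier symbol is $\delta_{ij} - \xi_i \xi_j /|\xi|^2$, can be written in terms of the second order Riesz transforms $\mathcal{R}_{ij}$. Consequently $P$ is bounded on $\mathbb{L}^s(\R^n;\R^n)$ for every $s \in (1,\infty)$, with operator norm controlled by $A(s)$. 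This yields $H,\nabla v \in \mathbb{L}^{r/(1+\e)}$ together with the rough bound $\norm{H}_{r/(1+\e)} \le A(r/(1+\e)) \norm{\nabla w}_r^{1+\e}$.

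The stability estimate requires upgrading this to a factor $|\e|$ on the right-hand side, and it rests on the identity $P(\nabla w)=0$, which annihilates every pure gradient. In particular $H$ vanishes at $\e=0$, and making this quantitative is a perfect setup for Stein-type complex interpolation. I would consider the analytic family of $\R^n$-valued maps $z \mapsto F(z) := P(|\nabla w|^z \nabla w)$ in the complex strip where $r/(1+\mathrm{Re}\, z) \in (s_1,s_2)$, that is $\mathrm{Re}\, z \in (r/s_2-1,\, r/s_1-1)$. On the two vertical boundary lines, the boundedness of $P$ yields $\norm{F(z)}_{s_i} \le A(s_i) \norm{\nabla w}_r^{1+\mathrm{Re}\, z}$. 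Pairing $F(z)$ against a suitable dual test field produces a scalar analytic function on the strip which vanishes at $z=0$; an elementary Schwarz-type reduction (dividing out the factor $z$) followed by the Hadamard three-lines lemma gives a bound that decays linearly as $\e \to 0$, with the explicit harmonic-measure constant $C(r) = \tfrac{2r(s_2-s_1)}{(r-s_1)(s_2-r)}(A(s_1)+A(s_2))$ dictated by the geometry of the strip.

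The main obstacle is making sense of the family $|\nabla w|^z \nabla w$ as an honest analytic family of vector fields in the strip, especially when $\mathrm{Re}\, z$ is close to $-1$ and $\nabla w$ may vanish on a set of positive measure. A standard remedy is to first prove the estimate for a dense class of smooth, compactly supported test functions $w$ whose gradient is bounded away from zero on its support, ensuring the analyticity is unambiguous, and then pass to the limit in the final inequality using the uniformity of the constant $C(r)$ with respect to the approximation parameter and the rough $\mathbb{L}^{r/(1+\e)}$-bound already established in the first step.
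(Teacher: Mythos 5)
This theorem is not proved in the paper at all: it is imported verbatim as Theorem $2.1$ of \cite{strz3}, which is a global version of Theorem $3$ in \cite{iwa2} (itself refining Theorem $8.1$ of \cite{iwa3}), so there is no in-paper argument to compare against. Your outline is essentially the proof given in those sources: define $\nabla v$ and $H$ via the Leray--Hodge projection, whose symbol $\delta_{ij}-\xi_i\xi_j/|\xi|^2$ is exactly that of $Id+(\mathcal{R}_{ij})$ (hence the appearance of $A(s)$), use the crucial cancellation $P(\nabla w)=0$, and run Stein-type complex interpolation on the family $z\mapsto P(|\nabla w|^z\nabla w)$ over the strip $\mathrm{Re}\,z\in(r/s_2-1,\,r/s_1-1)$, dividing out the zero at $z=0$ before applying the three-lines lemma; your boundary bounds $\norm{|\nabla w|^z\nabla w}_{s_i}=\norm{\nabla w}_r^{1+\mathrm{Re}\,z}$ are correct, and the constant $\tfrac{2r(s_2-s_1)}{(r-s_1)(s_2-r)}$ is indeed (twice) the sum of the reciprocal distances $\tfrac{r-s_1}{s_1}$, $\tfrac{s_2-r}{s_2}$ from the origin to the two boundary lines, so the numerology is consistent with the quoted statement.

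One step, however, fails as literally written: your proposed dense class is empty. A nontrivial $w\in C^\infty_c(\R^n)$ can never have $|\nabla w|$ bounded away from zero on its support, since $\nabla w$ is continuous and must vanish somewhere on the boundary of the open set $\{\nabla w\neq 0\}$ (otherwise that set would be clopen and hence all of $\R^n$). The standard remedy is not to regularize $w$ at all: with the convention $|\nabla w|^z\nabla w:=0$ on $\{\nabla w=0\}$ and $\mathrm{Re}\,z>-1$, the scalar functions $z\mapsto\int |\nabla w|^z\nabla w\cdot g$, for $g$ simple (or bounded with compact support), are analytic in the strip by dominated convergence and Morera's theorem, because the integrand is pointwise dominated by $|\nabla w|^{1+\mathrm{Re}\,z}|g|$, locally uniformly in $z$; this is enough to run the Stein interpolation with the duality normalization you allude to, and no lower bound on $|\nabla w|$ is ever needed. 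With that repair (or any other legitimate approximation, e.g.\ truncating the small and large values of $|\nabla w|$ and passing to the limit using the stability of the constant), your argument matches the proof in \cite{iwa2,strz3}.
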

Later, we are going to apply this Theorem by choosing $r:=p \in (n,P_0)$, $\e:=\tfrac{p-n}{n}$, so that $n=\tfrac{r}{1+\e}$. Therefore, we can choose $s_1:=\tfrac{3}{2}$, $s_2:=n+2$, assuming $P_0 \le n+1$, and estimate the constants as
\begin{equation}\label{eq.boundc1}
C(p) \le \tfrac{2 p (n+2-\tfrac{3}{2})}{(p-\tfrac{3}{2})(n+2-p)}(A(\tfrac{3}{2})+A(n+2)) \le \tfrac{2 (n+1) (2n+1)}{2n-3} [2+\tfrac{\pi^2}{4} \tan (\tfrac{\pi}{3})^2+\tfrac{\pi^2}{4} \cot (\tfrac{\pi}{2(n+2)})^2] =: C_1(n),
\end{equation}
where we have argued as in the proof of Theorem $8.14$ in \cite{iwa4} to deduce
\begin{align}
A(\tfrac{3}{2}) \le 1+ \tfrac{\pi^2}{4} \tan (\tfrac{\pi}{3})^2,\quad 
A(n+2) \le 1+ \tfrac{\pi^2}{4} \cot (\tfrac{\pi}{2(n+2)})^2,
\end{align}
and used the bounds $n<p<P_0\le n+1$.
These estimates come from Theorem $1.1$ in \cite{iwa4}.
Combining $|\e| \le P_0-1 \le n$ and the Hodge decomposition we also deduce 
\begin{equation}\label{eq.gradienthodge}
\norm{\nabla v}_{\frac{r}{1+\e}} \le \norm{|\nabla w|^{\e} \nabla w}_{\frac{r}{1+\e}}+\norm{H}_{\frac{r}{1+\e}} \le  (C_1(n) n+1) \norm{\nabla w}_r^{1+\e}.
\end{equation}
\subsection{Hardy and BMO spaces}
Consider a non-negative bump function $0 \le \phi \in C^{\infty}_c(\R^n)$, with integral $\int \phi =1$, and set $\phi_\lambda (x):= \lambda^{-n} \phi (\tfrac{x}{\lambda})$. We say that a function $f \in \mathbb{L}^1$ is in the \textbf{Hardy space} $\mathcal{H}^1(\R^n)$ if
\begin{equation}
f_*(x):=\sup_{\lambda>0} |\phi_\lambda * f|(x) \in \mathbb{L}^1(\R^n).
\end{equation}
We can endow $\mathcal{H}^1(\R^n)$ with the norm
\begin{equation}
\norm{f}_{\mathcal{H}^1(\R^n)}:= \norm{f}_1+\norm{f_*}_1,
\end{equation}
in order to make it a Banach space.

Coifman-Lions-Meyer-Semmes in \cite{coi0} showed that, for any $q \in (1,+\infty)$, $b \in W^{1,q}(\R^n)$ and $E \in \mathbb{L}^{q'}(\R^n;\R^n)$, where $E$ is divergence free in the distributional sense, then $E \cdot \nabla b \in \mathcal{H}^1(\R^n)$ and the following estimate holds
\begin{equation}
\norm{E \cdot \nabla b}_{\mathcal{H}^1(\R^n)} \le C(n,q) \norm{\nabla b}_q \norm{E}_{q'}.
\end{equation}
This estimate depends crucially on $q$, it certainly degenerates as $q$ approaches $1$ and $+ \infty$, and from the proof in \cite{coi0}, a priori it may also diverge as $q \searrow n$ (although the inequality is true for all $q \ge n$), in view of the application of Sobolev's embedding in Lemma II.1 there. This motivates the need of the following Lemma:
\begin{lemma}[Uniform CMLS Lemma]\label{lemma.CMLSglobal}
There exists a constant $C_2(n)$ such that for all $p \in (n,n+\tfrac{1}{2})$, for any $b \in W^{1,\tfrac{n}{n-p+1}}(\R^n)$ and $E \in \mathbb{L}^{\tfrac{n}{p-1}}(\R^n;\R^n)$, where $E$ is divergence free in the distributional sense, then $E \cdot \nabla b \in \mathcal{H}^1(\R^n)$ and the following uniform estimate holds
\begin{equation}\label{eq.CMLS}
\norm{E \cdot \nabla b}_{\mathcal{H}^1(\R^n)} \le C_2(n) \norm{\nabla b}_{\tfrac{n}{n-p+1},\R^n} \norm{E}_{\tfrac{n}{p-1},\R^n}.
\end{equation}
\end{lemma}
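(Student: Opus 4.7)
The plan is to reproduce the argument of Coifman-Lions-Meyer-Semmes (Lemma II.1 in \cite{coi0}), specialized to the Hölder-conjugate pair $q := \tfrac{n}{n-p+1}$ and $q' := \tfrac{n}{p-1}$, while carefully tracking the dependence of every constant on $p$ and verifying uniformity in $p \in (n, n + \tfrac{1}{2})$.

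First, I would exploit $\mathrm{div}\,E = 0$ to rewrite $E \cdot \nabla b = \mathrm{div}(bE)$. For a fixed mollifier $\phi \in C^\infty_c(B_1)$ with $\int \phi = 1$, integration by parts in the convolution gives
\begin{equation*}
\phi_\lambda * (E \cdot \nabla b)(x) = \lambda^{-1} \sum_{j=1}^n (\partial_j \phi)_\lambda * \bigl[(b - b_{B(x,\lambda)})\, E_j\bigr](x),
\end{equation*}
where subtracting the mean $b_{B(x,\lambda)}$ is admissible precisely because $\mathrm{div}\,E = 0$. I would then apply Hölder's inequality in the exponents $q, q'$, followed by Poincaré-Wirtinger \eqref{eq.poincare}---whose constant $2^n$ is explicitly independent of $q$---and bound the $L^q$ and $L^{q'}$ averages over balls by Hardy-Littlewood maximal functions. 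The exponent arithmetic $\tfrac{n}{q} + \tfrac{n}{q'} = n$ cancels the powers of $\lambda$ and yields the pointwise estimate
\begin{equation*}
(E \cdot \nabla b)_*(x) \leq C(n) \bigl[M(|\nabla b|^q)(x)\bigr]^{1/q} \bigl[M(|E|^{q'})(x)\bigr]^{1/q'}.
\end{equation*}

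The hard part is then integrating this bound to obtain strong $\mathcal{H}^1$-membership rather than merely weak-$L^1$, which is all that a direct Hölder estimate in the Lorentz spaces $L^{q,\infty} \cdot L^{q',\infty}$ would give. Here the CLMS cancellation argument must be invoked: one rewrites $\mathrm{div}(bE)$ through Riesz transforms of products and uses the strong-type maximal inequality at auxiliary exponents, exploiting the null-form structure of divergence-free $\cdot$ gradient. The remaining constants to be tracked are the Riesz transform operator norms $\|R_i R_j\|_{L^r \to L^r}$ and the strong-type maximal constants; for $p \in (n, n+\tfrac{1}{2})$ the exponents stay in the compact subranges $q \in (n, 2n)$ and $q' \in (\tfrac{2n}{2n-1}, \tfrac{n}{n-1})$, bounded away from the critical thresholds $1$ and $\infty$, so these constants depend continuously on $q, q'$ in a controlled way and collapse into a single dimension-only constant $C_2(n)$. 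The main obstacle flagged in the paper itself is that the original CLMS proof applies Sobolev's embedding $W^{1,q} \hookrightarrow L^{nq/(n-q)}$, whose constant explodes as $q \to n^{-}$; this is sidestepped entirely since $q > n$ in the subcritical regime, so the oscillation of $b$ is handled by Poincaré-Wirtinger with the dimensional constant $2^n$ rather than by any Sobolev embedding, and the constraint $p < n + \tfrac{1}{2}$ keeps $q'$ strictly bounded away from $1$.
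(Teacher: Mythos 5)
Your overall plan (retrace the Coifman--Lions--Meyer--Semmes argument while tracking how every constant depends on $p$) is the same as the paper's, and your pointwise estimate is correct as far as it goes; the problem is that the way you set it up destroys the mechanism needed at the decisive step, and the repair you sketch is not the right one. Using H\"older at the endpoint pair $(q',q)=(\tfrac{n}{p-1},\tfrac{n}{n-p+1})$ and Poincar\'e--Wirtinger at the same exponent $q$ gives $(E\cdot\nabla b)_*(x)\le C(n)\,[M(|E|^{q'})(x)]^{1/q'}[M(|\nabla b|^{q})(x)]^{1/q}$, but then neither factor obeys a strong-type bound at the exponent you need (e.g.\ $\norm{[M(|E|^{q'})]^{1/q'}}_{q'}$ would require the maximal operator to be bounded on $L^1$), so, as you concede, integration only yields $L^{1,\infty}$ and not membership in $\mathcal{H}^1(\R^n)$. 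The completion you then invoke is not how Theorem II.1 of \cite{coi0} is proved: no Riesz transforms and no ``null-form'' rewriting of the product enter that proof (the Riesz-transform norms you propose to track appear in this paper only through the Iwaniec--Sbordone constant \eqref{eq.boundc1}, which belongs to the Hodge-decomposition step, not to this lemma). The actual mechanism is to rerun your pointwise estimate with auxiliary exponents: put H\"older exponent $s$ on $E$ and $s'$ on the oscillation $b-[b]_{B(x,\lambda)}$, and control that oscillation by the Sobolev--Poincar\'e inequality at an exponent $\beta$ with $\beta^*=s'$, so that $\tfrac1s+\tfrac1\beta=1+\tfrac1n$. The extra $\tfrac1n$ of room created by the Sobolev gain is precisely what allows the Hardy--Littlewood maximal functions to be taken at exponents strictly below $q'$ and $q$, where the strong maximal inequality applies with constants depending only on $n$. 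This is exactly what the paper checks: the gradient factor is handled with $\beta=\tfrac{n}{n-p+2}$ (so $\beta^*=\tfrac{n}{n-p+1}$), and the two uniformities reduce to the Sobolev--Poincar\'e constant at $\beta\in(\tfrac n2,\tfrac{2n}{3})$, far from the degenerate regime $\beta\to n$, and to maximal-function bounds at exponents bounded away from $1$, e.g.\ $q/\beta=\tfrac{n-p+2}{n-p+1}\ge 2$ for $p\in(n,n+\tfrac12)$.

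This is why your final paragraph, which presents the avoidance of Sobolev's embedding as a feature, in fact codifies the gap: if the oscillation of $b$ is estimated by Poincar\'e--Wirtinger at the same exponent used in H\"older, the two maximal exponents $s,s'$ satisfy $\tfrac1s+\tfrac1{s'}=1=\tfrac1{q'}+\tfrac1q$, so they cannot both be strictly below $q'$ and $q$, and at least one of the required strong-type maximal estimates is unavailable no matter how $p$ is restricted. The correct observation---the one the paper makes---is not that Sobolev's embedding can be dropped because $q>n$, but that it is applied at the auxiliary exponent $\beta=\tfrac n{n-p+2}$, which stays uniformly away from $n$ as $p\searrow n$, so its constant is dimensional; that, not any Riesz-transform bound, is the source of the uniform constant $C_2(n)$. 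As written, your argument does not establish the strong $\mathcal{H}^1$ bound \eqref{eq.CMLS}.
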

\begin{proof}
The uniformity of the constant $C_2$ for $p \in (n,n+\tfrac{1}{2})$ can be deduced carefully analyzing the argument in \cite{coi0}, Section $2$. In Theorem $II.1$ there, we choose their parameter "p" to be $\tfrac{n}{p-1}$. Then we apply Lemma $II.1$ with parameters $\alpha:=\tfrac{n}{p-1}$ and $\beta:=\tfrac{n}{n-p+2}$. The proof of that Lemma uses Sobolev's embedding to bound the $\alpha'=\beta^*=\tfrac{n}{n-p+1}$-norm of $b$ minus its average with the $\beta=\tfrac{n}{n-p+2}$-norm of $\nabla b$. It is clear that this constant is independent of $p \in (n,n+\tfrac{1}{2})$ as $\beta \in (\tfrac{n}{2},\tfrac{2 n}{3})$ for these values. The other uniformity to be checked is the bound on the Hardy–Littlewood maximal function, which however is again clear as both $\alpha:=\tfrac{n}{p-1} \in (\tfrac{2n}{2n-1},\tfrac{n}{n-1})$ and $\beta:=\tfrac{n}{n-p+2} \in (\tfrac{n}{2},\tfrac{2 n}{3})$ are bounded away uniformly from $1$ and $+\infty$.
\end{proof}
Let us remark that the uniformity claimed in the analogous Theorem $2.2$ of \cite{strz3} is on intervals $[1+\delta_0,1+\delta_0^{-1}]$ \textit{compactly contained in} $(1,m)$, coherently to the case treated there.

As a Corollary, we can prove a local version of Lemma \ref{lemma.CMLSglobal}, inspired by the analogous Corollary $3$ \cite{strz2}, which allows us to completely bypass the concept of local Hardy space $\mathcal{H}^1_{loc}$.
\begin{corollary}[Uniform local CMLS Lemma]
Let $B$ be a ball in $\R^n$. Let $p \in (n,n+\tfrac{1}{2})$, $b \in W^{1,\frac{n}{n-p+1}}(B)$ and $E \in \mathbb{L}^{\frac{n}{p-1}}(B;\R^n)$, and suppose that $E$ is divergence free in the distributional sense. Then there exists a function $h \in \mathcal{H}^1(\R^n)$ such that
$h=\nabla b \cdot E$ in $B$, and that satisfies the bound
\begin{equation}\label{eq.CMLSlocal}
\norm{h}_{\mathcal{H}^1(\R^n)} \le C_3 \norm{\nabla b}_{ \frac{n}{n-p+1} } \norm{E}_{\frac{n}{p-1}}.
\end{equation}
The constant $C_3=C_3(n)$ is dimensional and does not depend on the size of $B$ and on the choice of $p \in (n,n+\tfrac{1}{2})$.
\end{corollary}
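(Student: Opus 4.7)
The plan is to deduce the local statement from the global Lemma \ref{lemma.CMLSglobal} by constructing extensions $\tilde b \in W^{1,q}(\R^n)$ (compactly supported) and $\tilde E \in \mathbb{L}^{q'}(\R^n;\R^n)$ (globally divergence-free), with $q := \tfrac{n}{n-p+1}$ and $q' := \tfrac{n}{p-1}$, arranged so that $\nabla \tilde b \cdot \tilde E$ agrees pointwise with $\nabla b \cdot E$ on $B$ and the involved norms are controlled by $\norm{\nabla b}_{q,B}$ and $\norm{E}_{q',B}$ uniformly in $p$ and in the radius of $B$. Once these extensions are in hand, the function $h := \nabla \tilde b \cdot \tilde E$ will automatically lie in $\mathcal{H}^1(\R^n)$ by Lemma \ref{lemma.CMLSglobal}, will restrict to $\nabla b \cdot E$ on $B$, and will satisfy \eqref{eq.CMLSlocal}.

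After rescaling to the unit ball, which is legitimate because both sides of \eqref{eq.CMLSlocal} are homogeneous of the same degree in the radius, I would fix a cutoff $\eta \in C_c^\infty(2B)$ with $\eta \equiv 1$ on $B$, extend $b - [b]_B$ to $\R^n$ via a Stein-type extension operator to obtain $\bar b$, and set $\tilde b := \eta \bar b$. The Poincar\'e--Wirtinger inequality \eqref{eq.poincare} and the boundedness of Stein extension then yield $\norm{\nabla \tilde b}_{q,\R^n} \le C(n) \norm{\nabla b}_{q,B}$ with a constant uniform in $p$, since $q$ stays in a compact subinterval of $(1,\infty)$, and by construction $\nabla \tilde b = \nabla b$ on $B$.

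The main technical obstacle is the divergence-free extension of $E$. Because $B$ is simply connected and $\Div E = 0$ on $B$, a standard stream-function / Poincar\'e-lemma construction produces an antisymmetric $2$-tensor field $A \in W^{1,q'}(B)$ satisfying $\partial_j A_{ij} = E_i$ on $B$, with $\norm{A}_{W^{1,q'}(B)} \le C(n) \norm{E}_{q',B}$ via standard Calder\'on--Zygmund estimates (uniform for $q'$ in a compact subinterval of $(1,\infty)$). Applying Stein extension componentwise to $A$ gives $\tilde A \in W^{1,q'}(\R^n)$ with $\tilde A = A$ on $B$, and the choice $\tilde E_i := \partial_j \tilde A_{ij}$ is divergence-free on $\R^n$ by antisymmetry of $\tilde A$, coincides with $E$ on $B$, and obeys $\norm{\tilde E}_{q',\R^n} \le C(n) \norm{E}_{q',B}$. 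The uniformity of all constants in $p \in (n, n+\tfrac{1}{2})$ follows automatically, since the exponents $q, q'$ remain in compact subintervals of $(1,\infty)$ throughout the argument.
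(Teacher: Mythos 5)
Your argument is correct and follows essentially the same route as the paper: extend $b$ by a bounded extension operator, obtain a globally divergence-free extension of $E$ by extending a potential (your antisymmetric $2$-tensor $A$ is precisely the Hodge dual of the $(n-2)$-form $T(\omega)$ produced by the Iwaniec--Lutoborski homotopy operator used in the paper's proof), and then invoke the global uniform CLMS lemma, with uniformity in $p$ coming from the fact that the exponents $\tfrac{n}{n-p+1}$ and $\tfrac{n}{p-1}$ stay in compact subintervals of $(1,\infty)$. The only refinement needed is to replace the appeal to ``standard Calder\'on--Zygmund estimates'' for the existence of $A$ with a precise reference such as Proposition $4.1$ in \cite{iwa5}, which is exactly where the uniform-in-exponent norm bound for this potential is established.
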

\begin{proof}
Following the proof of Corollary $3$ in \cite{strz2}, we need to check the uniformity for $p \in (n,n+\tfrac{1}{2})$ of the norm bounds on two operators: the extension operator $E_s:W^{1,s}(B;\Lambda^\ell(\R^n)) \rightarrow W^{1,s}_{loc}(\R^n;\Lambda^\ell(\R^n))$, defined on $\ell$-differential forms, for $s=\frac{n}{p-1}$ and $s=\frac{n}{n-p+1}$, which holds as both these exponents are bounded away uniformly from $1$ and $+\infty$; secondly, the homotopy operator $T:\mathbb{L}^{\frac{n}{p-1}}(B;\Lambda^\ell(\R^n)) \rightarrow W^{1,\frac{n}{p-1} }_{loc}(\R^n;\Lambda^{\ell-1}(\R^n))$ defined in $(4.5)$ of \cite{iwa5}, verifying for all forms $\omega \in W^{1,\frac{n}{p-1}}$ the identity $\omega= T(d \omega)+d T(\omega)$, has uniformly bounded norm by Proposition $4.1$ in \cite{iwa5} and the fact that $\frac{n}{p-1}$ is bounded away uniformly from $1$ and $+\infty$. Finally, we can define the $(n-1)$-form
\begin{equation}
\omega := \sum_{j=1}^{n} (-1)^{j-1} E^j dx^1 \wedge ... \wedge \widehat{dx^j} \wedge ... \wedge dx^n,
\end{equation}
where $\hat{\cdot}$ means that we are skipping the $j$-component, so that $*d\omega = \Div(E)=0$ and we can verify that $h:=\nabla E_{\frac{n}{n-p+1} } b + d \big( E_{\frac{n}{p-1}} T(\omega) \big)$ is in the Hardy space and satisfies the inequality \eqref{eq.CMLSlocal}.
\end{proof}

We say that a function $u$ has \textbf{bounded mean oscillations}, and we write $u \in BMO(\R^n)$, if
\begin{equation}
[u]_{BMO(\R^n)}:= \sup \fint_B |u(y)-[u]_B| dy < +\infty,
\end{equation}
where the supremum is taken over all balls $B \subset \R^n$.

Restrictions of BMO functions are not of bounded mean oscillations in the corresponding subset, however we can suitably cut-off BMO functions under a stronger Sobolev assumption. This fact, by now classical, can be found for example in the proof of Lemma $1$ in \cite{strz2}.  
\begin{lemma}\label{lemma.cutoff}
Consider a ball $B(x_0,4R_0) \subset \R^n$, a tangent vector field $Y \in W^{1,n}(B_{4R_0}(x_0);T\N)$ and a cut-off function $\eta_{2R_0} \in C^{\infty}_c(B_{4R_0}(x_0))$ such that $\eta_{2R_0} \in [0,1]$, $\eta_{2R_0} \equiv 1$ on $B_{2R_0}(x_0)$ and $|\nabla \eta_{2R_0}| \le 4 R_0^{-1}$. Then the vector field $\Tilde{Y}:= \eta_{2R_0} (Y-[Y]_{B_{4 R_0}(x_0)}) \in W^{1,n}(\R^n;\R^K)$ and we have the BMO-seminorm bound
\begin{equation}
[\Tilde{Y}]_{BMO,\R^n} \le C_4 \norm{\nabla Y}_{n,B_{4 R_0}(x_0)},
\end{equation}
for a dimensional constant $C_4=C_4(n)$.
\end{lemma}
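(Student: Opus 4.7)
The plan is to reduce the BMO estimate to the global Sobolev embedding $W^{1,n}(\R^n) \hookrightarrow BMO(\R^n)$ by first producing a globally defined $W^{1,n}$-function with the right size. First I would remark that $\eta_{2R_0}(Y-[Y]_{B_{4R_0}(x_0)})$ is compactly supported in $B_{4R_0}(x_0)$ and can be extended by zero to the whole of $\R^n$; since $\eta_{2R_0}$ is smooth and $Y-[Y]_{B_{4R_0}(x_0)} \in W^{1,n}(B_{4R_0}(x_0))$, the Leibniz rule gives the weak gradient
\begin{equation*}
\nabla \Tilde{Y} = (\nabla \eta_{2R_0})(Y-[Y]_{B_{4R_0}(x_0)}) + \eta_{2R_0} \nabla Y \quad \text{a.e. in } \R^n.
\end{equation*}

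Next I would apply the Poincar\'e--Wirtinger inequality \eqref{eq.poincare} (with $q=n$) on $B_{4R_0}(x_0)$ to control the first summand: since $|\nabla \eta_{2R_0}| \le 4 R_0^{-1}$ is supported inside $B_{4R_0}(x_0)$,
\begin{equation*}
\norm{(\nabla \eta_{2R_0})(Y-[Y]_{B_{4R_0}(x_0)})}_{n,\R^n} \le \tfrac{4}{R_0}\,\norm{Y-[Y]_{B_{4R_0}(x_0)}}_{n, B_{4R_0}(x_0)} \le 4 \cdot 2^n \cdot 4 \,\norm{\nabla Y}_{n,B_{4R_0}(x_0)},
\end{equation*}
while the second summand is trivially bounded by $\norm{\nabla Y}_{n,B_{4R_0}(x_0)}$ since $0 \le \eta_{2R_0} \le 1$. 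Combining these yields a dimensional bound $\norm{\nabla \Tilde{Y}}_{n,\R^n} \le C(n) \norm{\nabla Y}_{n,B_{4R_0}(x_0)}$ independent of $R_0$.

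Finally, I would invoke the standard embedding $W^{1,n}(\R^n) \hookrightarrow BMO(\R^n)$: for any ball $B \subset \R^n$ of radius $r$, H\"older combined with \eqref{eq.poincare} gives
\begin{equation*}
\fint_B |\Tilde{Y}-[\Tilde{Y}]_B|\,dy \le \Big(\fint_B |\Tilde{Y}-[\Tilde{Y}]_B|^n\,dy\Big)^{1/n} \le \frac{2^n r}{|B|^{1/n}} \Big(\int_B |\nabla \Tilde{Y}|^n\,dy\Big)^{1/n} \le C(n)\, \norm{\nabla \Tilde{Y}}_{n,\R^n},
\end{equation*}
because $|B|^{1/n} = \omega_n^{1/n} r$. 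Taking the supremum over all balls $B \subset \R^n$ and combining with the gradient bound above produces the claimed estimate with $C_4 = C_4(n)$ purely dimensional.

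There is no real obstacle here: the scaling of Poincar\'e in $L^n$ makes all constants automatically independent of the radius $R_0$, and the cutoff factor $R_0^{-1}$ coming from $\nabla \eta_{2R_0}$ is exactly absorbed by the $R_0$ gained from Poincar\'e--Wirtinger. The only point requiring a minimum of care is to confirm that the chain of constants is indeed dimensional, which follows from the explicit form of \eqref{eq.poincare} and the uniform constants in the scale-invariant Sobolev--Poincar\'e embedding at the critical exponent.
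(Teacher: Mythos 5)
Your proof is correct and follows essentially the same route as the paper: the Leibniz rule for $\nabla \Tilde{Y}$, the Poincar\'e--Wirtinger inequality \eqref{eq.poincare} on $B_{4R_0}(x_0)$ to absorb the $R_0^{-1}$ from the cutoff, and the scale-invariant critical Poincar\'e/H\"older estimate on arbitrary balls to get the BMO bound. The only cosmetic difference is that you establish the global bound $\norm{\nabla \Tilde{Y}}_{n,\R^n} \le C(n)\norm{\nabla Y}_{n,B_{4R_0}(x_0)}$ first and then apply the per-ball estimate, whereas the paper performs the splitting inside the per-ball estimate; the constants are dimensional in both cases.
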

\begin{proof}
The proof follows from a simple application of Poincare's and H\"older's inequalities, combined with the bounds on $\eta_{2R_0}$. Indeed, for any ball $B \subset \R^n$ we compute
\begin{align*}
&\fint_B |\Tilde{Y}-[\Tilde{Y}]_B|\le \Big( \fint_B |\Tilde{Y}-[\Tilde{Y}]_B|^n \Big)^{\frac{1}{n}} \le C(n) \Big(\int_B |\nabla \Tilde{Y}|^n \Big)^{\frac{1}{n}} = C(n) \Big(\int_{B \cap B_{4 R_0}(x_0) } |\nabla \Tilde{Y}|^n \Big)^{\frac{1}{n}} \\
&\le C(n) \Big(\int_{B_{4 R_0}(x_0)} |\nabla Y|^n \Big)^{\frac{1}{n}} + C(n) \Big(\int_{B_{4 R_0}(x_0)} |\nabla \eta_{2R_0}|^n |Y-[Y]_{B_{4 R_0}(x_0)}|^n \Big)^{\frac{1}{n}} \\
&\le C(n) \Big(\int_{B_{4 R_0}(x_0)} |\nabla Y|^n \Big)^{\frac{1}{n}} + C(n) \Big(\int_{B_{4 R_0}(x_0)} (4R_0)^{-n} |Y-[Y]_{B_{4 R_0}(x_0)}|^n \Big)^{\frac{1}{n}}\\
&\le C(n) \Big(\int_{B_{4 R_0}(x_0)} |\nabla Y|^n \Big)^{\frac{1}{n}}. \qedhere
\end{align*}
\end{proof}
A classical result due to Fefferman \cite{fef} says that $BMO(\R^n)$ is the dual space of $\mathcal{H}^1(\R^n)$ and the duality coupling is bounded: for any $v \in BMO(\R^n)$ and $h \in \mathcal{H}^1(\R^n)$ we have
\begin{equation}\label{eq.hardybmo}
\left| \int  v h \right| \le C_5(n) [v]_{BMO(\R^n)} \norm{h}_{\mathcal{H}^1(\R^n)}.
\end{equation}
\section{Main Regularity result}\label{sec.regularity}
Firstly, let us recall the set-up of Theorem \ref{th.SUregularity}. Let $(\N,h)$ be a smooth compact Riemannian homogeneous space with a left invariant metric $h$, identified to a quotient of Lie groups $G/H$, where $G$ is connected and $H$ is a closed subgroup of it. The Lie algebra of $G$ is denoted by $\mathfrak{g}$. We are given Helein fields $(X_1,...,X_\ell)$ and $(Y_1,...,Y_\ell)$ associated to it.

For any $p \in (n,P_0)$, we are given a generalized $p$-harmonic map $u_p \in W^{1,p-1}(\M;\N)$, that is a solution of \eqref{eq.generalizedhomogeneous} which we recall here:
\begin{equation*}
\int_{\M} |\nabla^g u|^{p-2} h(\nabla^g u,X_a(u)) \nabla^g \zeta =0, \quad \forall \zeta \in C^{\infty}_c(\M).
\end{equation*}
Moreover, by Definition \ref{def.helein} we know that
\begin{equation*}
|\nabla^g u|^{p-2} \nabla^g u= \sum_{a=1}^\ell h(|\nabla^g u|^{p-2} \nabla^g u,X_a(u)) Y_a(u),
\end{equation*}
hence we have
\begin{equation}\label{eq.veryweakhomogeneous}
\int_{\M} |\nabla^g u|^{p-2} \nabla^g u \nabla^g \zeta - \sum_{a=1}^\ell |\nabla^g u|^{p-2} h(\nabla^g u,X_a(u)) Y_a(u) \nabla^g \zeta =0, \quad \forall \zeta \in C^{\infty}_c(\M;\R^K).
\end{equation}
Given the local nature of Theorem \ref{th.SUregularity} we will restrict the attention to $\M=B_{4 R_0}(x_0) \subset \R^n$ and drop the index $g$, keeping in mind that all the constants will depend also on $(\M,g)$.
\begin{proof}[Proof of Theorem \ref{th.SUregularity}]
Consider an arbitrary ball $B_{4R}(x) \subset B_{4 R_0}(x_0)$.
Define cut-off functions $\eta_{R} \in C^\infty_c(B_{2 R}(x))$ such that $\eta_{R} \equiv 1$ on $B_{R}(x)$, $\eta_{R} \in [0,1]$ and $|\nabla \eta_{R}| \le 2 R^{-1}$, and $\eta_{2 R} \in C^\infty_c(B_{4 R}(x))$ with analogous properties as in Lemma \ref{lemma.cutoff}. We set
\begin{equation*}
\Tilde{u}:= \eta_{R} (u-[u]_{B_{2 R}(x)}) \in W^{1,n}(\R^n;\R^K).
\end{equation*}
It is clear that $\Tilde{u}$ is supported in the ball $B_{2R}(x)$. If we set $\Tilde{Y}_a:= \eta_{2R} (Y_a(u)-[Y_a(u)]_{B_{4R}})$, we know that $\Tilde{Y}_a=Y_a(u)-[Y_a(u)]_{B_{4R}}$ on $B_{2R}(x)$; moreover, from Lemma \ref{lemma.cutoff} and the smoothness of the fields $Y_a$'s we deduce 
\begin{equation}\label{eq.boundY}
[\Tilde{Y}_a]_{BMO,\R^n} \le C(n) \norm{\nabla [Y_a(u)] }_{n,B_{4R}(x) } = C(n) \norm{(\nabla^{\N} Y_a)(u) \nabla u }_{n,B_{4R}(x) } \le C(n,\N,h) \norm{\nabla u}_{n,B_{4 R}(x)}.
\end{equation}
For $\e=n-p$ we consider the Hodge decomposition constructed in Theorem \ref{th.hodge} with $r=n$
\begin{equation*}
|\nabla \Tilde{u}|^{\e} \nabla \Tilde{u}=: \nabla v + H.
\end{equation*}
From Theorem \ref{th.hodge} and inequalities \eqref{eq.poincare}, \eqref{eq.boundc1} and \eqref{eq.gradienthodge} we deduce the bounds
\begin{equation*}
\norm{\nabla v}_{\frac{n}{n-p+1},\R^n} \le C(n) \norm{\nabla \Tilde{u}}_{n,\R^n}^{1-p+n} \le C(n) \norm{\nabla u}_{n,B_{2R}(x)}^{1-p+n},
\end{equation*}
and
\begin{equation*}
\norm{H}_{\frac{n}{n-p+1},\R^n} \le C_1(n) |p-n| \norm{\nabla \Tilde{u}}_{n,\R^n}^{1-p+n}\le C(n) |p-n| \norm{\nabla u}_{n,B_{2R}(x)}^{1-p+n}.
\end{equation*}
Set $\Tilde{v}:=\eta_{R} (v-[v]_{B(x,2R)}) \in W^{1,n}_0(B(x,2R))$. From the discussion at the beginning of this Section, we can test the identity \eqref{eq.veryweakhomogeneous} with $\Tilde{v}$ to get
\begin{equation}\label{eq.ELtested}
\int_{B_{2R}(x)} |\nabla u|^{p-2}\nabla u \nabla(\eta_{R} (v-[v]_{B(x,2R)})) = \int_{B_{2R}(x)} \sum_{a=1}^\ell |\nabla u|^{p-2} h(\nabla u,X_a(u)) Y_a(u) \nabla(\eta_{R} (v-[v]_{B(x,2R)})).
\end{equation}
We start rewriting the left-hand-side as follows
\begin{align*}
&\int_{B_{2R}(x)} |\nabla u|^{p-2}\nabla u \nabla(\eta_{R} (v-[v]_{B(x,2R)})) = \int_{B_{2R}(x)} \eta_R |\nabla u|^{p-2}\nabla u \nabla v+(v-[v]_{B(x,2R)})|\nabla u|^{p-2}\nabla u \nabla \eta_{R}\\
&=\int_{B_{2R}(x)} |\nabla u|^{p-2}\nabla \Tilde{u} (\nabla v \pm H)-(u-[u]_{B(x,2R)})|\nabla u|^{p-2}\nabla v \nabla \eta_{R}+(v-[v]_{B(x,2R)})|\nabla u|^{p-2}\nabla u \nabla \eta_{R}\\
&=\int_{B_{2R}(x)} |\nabla u|^{p-2}|\nabla \Tilde{u}|^{2+\e}-(u-[u]_{B(x,2R)})|\nabla u|^{p-2}\nabla v \nabla \eta_{R}+(v-[v]_{B(x,2R)})|\nabla u|^{p-2}\nabla u \nabla \eta_{R} -|\nabla u|^{p-2}\nabla \Tilde{u} H.
\end{align*}
Therefore, by the properties of $\eta_R$, $\delta$-weighted Young's inequality with exponents $(\tfrac{n}{p-2},n,\tfrac{n}{n-p+1})$ or $(\tfrac{n}{p-1},\tfrac{n}{n-p+1})$, H\"older's inequality with exponents $(\tfrac{n}{p-1},\tfrac{n}{n-p+1})$, combined with inequalities \eqref{eq.poincare}, \eqref{eq.boundc1} and \eqref{eq.gradienthodge} we obtain a bound for the left-hand-side of \eqref{eq.ELtested}:
\begin{align*}
&\int_{B_{2R}(x)} |\nabla u|^{p-2}\nabla u \nabla(\eta_{R} (v-[v]_{B(x,2R)})) \ge \int_{B_{2 R}(x)} |\nabla u|^{p-2}|\nabla \Tilde{u}|^{2+\e}-\delta \Big[ \big| \tfrac{u-[u]_{B(x,2R)}}{R}\big|^n+|\nabla v|^{\frac{n}{n-p+1}} \Big]\\
&- \int_{A(x,R,2R)} C(\delta,n)|\nabla u|^n - \norm{H}_{\frac{n}{n-p+1},B(x,2R)} \norm{|\nabla u|^{p-2}\nabla \Tilde{u}}_{\frac{n}{p-1},B(x,2R)} \\
&\ge D_n(u;B(x,R))-C(n) \big( \delta +|p-n| \big) D_n(u;B(x,2R))-C(\delta,n) D_n(u;A(x,R,2R)).
\end{align*}
Recall that by definition of generalized $p$-harmonic map, the vector fields $W_a:=|\nabla u|^{p-2} h(\nabla u,X_a(u))$ are divergence free in the distributional sense. Therefore, combining with the properties of $\eta_{2R}$, the right-hand-side of \eqref{eq.ELtested} can be rewritten as
\begin{align*}
&\int_{B_{2R}(x)} \sum_{a=1}^\ell |\nabla u|^{p-2} h(\nabla u,X_a(u)) Y_a(u) \nabla(\eta_{R} (v-[v]_{B(x,2R)}))= \int_{B_{2R}(x)} \sum_{a=1}^\ell W_a (Y_a(u)-[Y_a(u)]) \nabla(\eta_{R} (v-[v]_{B(x,2R)}))\\
&= \int_{B_{2R}(x)} \sum_{a=1}^\ell W_a \eta_{2R} (Y_a(u)-[Y_a(u)]) \nabla(\eta_{R} (v-[v]_{B(x,2R)}))=\int_{B_{2R}(x)} \sum_{a=1}^\ell W_a \Tilde{Y}_a(u) \nabla(\eta_{R} (v-[v]_{B(x,2R)})).
\end{align*}
This can be estimated through Lemma \ref{lemma.cutoff} combined with the inequalities \eqref{eq.boundY}, \eqref{eq.hardybmo} and \eqref{eq.CMLSlocal}:
\begin{align*}
&\int_{B_{2R}(x)} \sum_{a=1}^\ell W_a \Tilde{Y}_a(u) \nabla(\eta_{R} (v-[v]_{B(x,2R)}))\le C_5 \sum_{a=1}^\ell \norm{W_a \nabla( \eta_{R} (v-[v]_{B(x,2R)}))}_{\mathcal{H}^1} [\Tilde{Y}_a(u)]_{BMO}\\
&\le C_3 C_5 \sum_{a=1}^\ell \norm{W_a}_{\frac{n}{p-1},B(x,2R)} \norm{\nabla(\eta_{R} (v-[v]_{B(x,2R)}))}_{\frac{n}{n-p+1},B(x,2R)} [\Tilde{Y}_a(u)]_{BMO} \le C(n,\N,h) D_n(u;B(x,4R))^{\frac{n+1}{n}}.
\end{align*}
Notice that in the last step we have also used that the Helein fields $X_a$'s and $Y_a$'s are smooth on the closed manifold $\N$.
We can combine the two estimates just obtained, worsening the estimate by trivial containments, to deduce from \eqref{eq.ELtested} that
\begin{equation*}
D_n(u;B(x,R)) \le C(n) \big( \delta +|p-n| \big) D_n(u;B(x,4R))+C(\delta,n) D_n(u;A(x,R,4R)) + C(n,\N,h) D_n(u;B(x,4R))^{\frac{n+1}{n}}.
\end{equation*}
By monotonicity of the $n$-energy we get
\begin{equation*}
D_n(u;B(x,4R)) \le D_n(u;B(x_0,4R_0)) \le \e_0^n,
\end{equation*}
hence, together with a standard hole-filling technique, we arrive to
\begin{equation}
D_n(u;B(x,R)) \le \Big[ \tfrac{C(n)( \delta +|p-n|)}{1+C(\delta,n)}+\tfrac{C(\delta, n)}{1+C(\delta,n)}+ C(n,\N,h) \e_0 \Big] D_n(u;B(x,4R))=: \theta D_n(u;B(x,4R));
\end{equation}
Choosing $\delta=\delta(n):=(4 C(n))^{-1}$, $P_0=P_0(n):=n+(4 C(n))^{-1}$, and afterwards
\begin{equation*}
\e_0=\e_0(n,\N,h):= \tfrac{1}{2} \tfrac{1}{2(1+C(\delta(n),n))} C(n,\N,h)^{-1},
\end{equation*}
we can ensure that $\theta=\theta(n)<1$, hence by iterating and interpolating, we can classically derive that $u \in C^{0,\alpha}_{loc}(B(x_0,4R_0))$ for $\alpha=\alpha(n):=-\log_4(\theta)$, along with the bound
\begin{equation*}
[u]_{C^{0,\alpha}(B_{2R_0}(x_0))} \le C(n,\N,h) D_n(u;B_{4R_0}(x_0))^{\frac{1}{n}} \le C(n,\N,h)\e_0.
\end{equation*}
Equivalently, $u \in W^{1,P_0}_{loc}(B(x_0,4R_0))$ for some $P_0=P_0(n) >n$ and
\begin{equation*}
D_{P_0}(u,B_{2R_0}(x_0))^{\frac{1}{P_0}} \le C(n,\N,h) D_n(u;B_{4R_0}(x_0))^{\frac{1}{n}} \le C(n,\N,h) \e_0.
\end{equation*}
For all $p \in (n,P_0)$, we can now rewrite \eqref{eq.veryweakhomogeneous} for all test maps $\zeta \in C^{\infty}_c (B_{2 R_0}(x_0);\R^K)$
\begin{align*}
\int_{B_{2 R_0}(x_0)} |\nabla u|^{p-2} \nabla u \nabla \zeta &= \int_{B_{2 R_0}(x_0)} \sum_{a=1}^\ell |\nabla u|^{p-2} h(\nabla u,X_a(u)) Y_a(u) \nabla \zeta \\
&= \int_{B_{2 R_0}(x_0)} \sum_{a=1}^\ell |\nabla u|^{p-2} h(\nabla u,X_a(u)) \nabla( Y_a(u) ) \zeta,
\end{align*}
which is \eqref{eq.weakhomogeneous}, to which we can apply Theorem $2$ in \cite{tor} (by possibly further decreasing the value of $\e_0$) to conclude.
\end{proof}
A standard covering argument yields the following Corollary.
\begin{corollary}\label{cor.global}
Let $P_0:=P_0(n,\M,g,\N,h) >n$ and $\alpha_0:=\alpha_0(n,\M,g,\N,h) \in (0,1)$ be the exponents found in Theorem \ref{th.SUregularity}. If $u \in W^{1,n}(\M;\N)$ is a generalized $p$-harmonic map for $p \in (n,P_0)$, then $u \in W^{1,P_0}(\M;\N)$ solves \eqref{eq.weaklypharmonic}, and $u \in C^{1,\alpha_0}(\M;\N)$.
\end{corollary}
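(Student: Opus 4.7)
The plan is to deduce the global regularity from Theorem \ref{th.SUregularity} via a standard finite covering argument, exploiting the absolute continuity of the $n$-energy density together with the compactness of $\M$.

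Since $u \in W^{1,n}(\M;\N)$ and $\M$ is compact, the measure $|\nabla u|^n d\mu_g$ is a finite Borel measure on $\M$. Absolute continuity of the integral then yields, for every $x_0 \in \M$, that $\lim_{R \to 0^+} D_n(u;B(x_0,4R)) = 0$. In particular, given the constant $\e_0 = \e_0(n,\M,g,\N,h) \in (0,1)$ produced by Theorem \ref{th.SUregularity}, for each $x_0 \in \M$ I would select a radius $R_0(x_0)>0$ small enough so that $D_n(u;B(x_0,4R_0(x_0))) \le \e_0^n$.

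By compactness of $\M$, from the open cover $\{B(x_0,R_0(x_0))\}_{x_0 \in \M}$ I would extract a finite subcover $\{B(x_i,R_0(x_i))\}_{i=1}^N$. On each enlarged ball $B(x_i,4R_0(x_i))$ the smallness hypothesis of Theorem \ref{th.SUregularity} is satisfied, so applying that theorem (to the trivial one-element family $(u_p)$ consisting of the single fixed $p$, for which the $p$-uniformity of the estimates plays no role) yields $u \in W^{1,P_0}_{loc}(B(x_i,4R_0(x_i));\N) \cap C^{1,\alpha_0}_{loc}(B(x_i,4R_0(x_i));\N)$ together with the fact that $u$ solves \eqref{eq.weaklypharmonic} on each such ball.

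Patching the local bounds along the finite subcover, and using that $\M$ is compact and that the relevant constants depend only on $(n,\M,g,\N,h)$, one upgrades the local $W^{1,P_0}$ and $C^{1,\alpha_0}$ estimates to global ones on $\M$; the equation \eqref{eq.weaklypharmonic} holds on the entire $\M$ since it is a local condition. No substantive obstacle is expected: the entire argument is a routine post-processing of Theorem \ref{th.SUregularity} once the absolute continuity of $|\nabla u|^n d\mu_g$ on the finite-volume manifold $\M$ is invoked.
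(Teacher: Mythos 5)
Your argument is correct and is exactly the ``standard covering argument'' the paper invokes without spelling out: absolute continuity of $|\nabla u|^n\,d\mu_g$ gives small-energy balls around every point, compactness of $\M$ gives a finite subcover, and Theorem \ref{th.SUregularity} (applied to the one-element family) plus patching yields the global $W^{1,P_0}$, $C^{1,\alpha_0}$ regularity and the validity of \eqref{eq.weaklypharmonic} on all of $\M$. No gap.
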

Under a uniform $D_n$-energy bound, we can prove this regularity to be uniform away from finitely many points, where bubbles are forming, as formalized in our previous paper \cite{dim}.

The same method of proof can be adapted to show the following Theorem.
\begin{theorem}\label{th.highsubcritical}
For any $\underline{P}>n$, there exist constants $\overline{P}:=\overline{P}(n,\underline{P},\M,g,\N,h) \in (\underline{P},\underline{P}+1)$ and $\alpha_1:=\alpha_1(n,\underline{P},\M,g,\N,h), \ \e_1:=\e_1(n,\underline{P},\M,g,\N,h) \in (0,1)$, such that the following statement holds. Assume that $(u_p)_{(\underline{P},\overline{P})}\in W^{1,\underline{P}}(B(x_0,4R_0);\N)$ is a family of generalized $p$-harmonic maps defined on a ball $B(x_0,4R_0) \subset \M$, satisfying uniformly the bound $ R_0^{\underline{P}-n} D_{\underline{P} }(u_p,B(x_0,4R_0)) \le \e_0^{\underline{P} }$. Then $u_p \in W^{1,\overline{P}}(B(x_0,2R_0);\N)$ solve \eqref{eq.weaklypharmonic}, and $u_p \in C^{1,\alpha_1}(B(x_0,R_0);\N)$ for all $p \in (\underline{P},\overline{P})$, along with uniform bounds.
\end{theorem}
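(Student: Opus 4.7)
The plan is to repeat the proof of Theorem \ref{th.SUregularity} with the critical exponent $n$ systematically replaced by the subcritical baseline $\underline{P}$, tracking all quantities through the Morrey scale-invariant energy $\Psi(x, R) := R^{\underline{P} - n} D_{\underline{P}}(u_p; B_R(x))$. Fix $\overline{P} \in (\underline{P}, \underline{P}+1)$ to be determined at the end. Given a ball $B_{4R}(x) \subset B_{4R_0}(x_0)$, use the same cut-offs $\eta_R$, $\eta_{2R}$ and set $\tilde u := \eta_R(u_p - [u_p]_{B_{2R}(x)})$, then apply Theorem \ref{th.hodge} with $r := \underline{P}$ and $\varepsilon := \underline{P} - p \in (-1, 0)$ to decompose $|\nabla \tilde u|^\varepsilon \nabla \tilde u = \nabla v + H$. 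The conjugate pair $(\underline{P}/(p-1),\ \underline{P}/(\underline{P}-p+1))$ now replaces the critical pair $(n/(p-1),\ n/(n-p+1))$; choosing $s_1 := (\underline{P}+1)/2$ and $s_2 := \underline{P}+1$ traps both exponents inside $(s_1, s_2)$ uniformly for $p \in (\underline{P}, \overline{P})$ provided $\overline{P}$ is close enough to $\underline{P}$, which yields uniform Hodge stability and Riesz operator constants depending only on $(n, \underline{P})$.

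Next I would establish baseline-$\underline{P}$ analogs of the three key black boxes. The uniform CMLS Lemma \ref{lemma.CMLSglobal} and its local corollary carry over verbatim for $p \in (\underline{P}, \underline{P} + \kappa)$ with $\kappa = \kappa(\underline{P})$ small, since the uniformity conditions used in \cite{coi0} only require the two conjugate exponents to lie in a compact subinterval of $(1, +\infty)$. The cut-off Lemma \ref{lemma.cutoff} is replaced by the bound $[\tilde Y_a]_{BMO, \R^n} \le C(n, \underline{P}, \N, h)\, R^{1 - n/\underline{P}}\, \norm{\nabla u_p}_{\underline{P}, B_{4R}(x)}$, proved via the sub-critical Sobolev-Poincar\'e inequality at exponent $\underline{P}$; the factor $R^{1 - n/\underline{P}}$ is precisely the Morrey scaling that will match $R^{\underline{P}-n}$ after raising to a suitable power. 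Testing \eqref{eq.veryweakhomogeneous} with $\tilde v := \eta_R(v - [v]_{B_{2R}(x)})$ and repeating the algebraic rearrangement in the proof of Theorem \ref{th.SUregularity}, the left-hand side is bounded below by $D_{\underline{P}}(u_p; B_R(x))$ minus an error of the form $C(n)(\delta + |p - \underline{P}|) D_{\underline{P}}(u_p; B_{2R}(x)) + C(\delta, n) D_{\underline{P}}(u_p; A(x, R, 2R))$, while the right-hand side, after substituting $\tilde Y_a$ for $Y_a(u_p)$ thanks to the div-freeness of $W_a$, is controlled via Fefferman's duality and the new local CMLS estimate by a quantity scaling as $R^{-(\underline{P} - n)/\underline{P}}\, \Psi(x, 4R)^{(\underline{P}+1)/\underline{P}}$.

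Multiplying through by $R^{\underline{P} - n}$ converts every term into a comparison between values of $\Psi$. After hole-filling and choosing $\delta$, then $\overline{P} - \underline{P}$, then $\varepsilon_1$ small enough in that order, I would obtain $\Psi(x, R) \le \theta\, \Psi(x, 4R)$ for some $\theta = \theta(n, \underline{P}, \N, h) < 1$, whence the iterated Morrey decay $\Psi(x, R) \le C (R/R_0)^{\underline{P} \alpha_1}$ with $\alpha_1 := \log_4(1/\theta)/\underline{P}$. By Morrey-Campanato this yields uniform $C^{0, \alpha_1}_{\text{loc}}$-regularity and $u_p \in W^{1, \overline{P}}_{\text{loc}}(B_{2R_0}(x_0))$, upon which the identity \eqref{eq.veryweakhomogeneous} specializes to \eqref{eq.weakhomogeneous}; Theorem $2$ in \cite{tor} (applied with $\varepsilon_1$ possibly further reduced) then upgrades this to uniform $C^{1, \alpha_1}$-regularity on $B_{R_0}(x_0)$. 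The main obstacle I expect is keeping every analytic constant uniform for $p \in (\underline{P}, \overline{P})$: unlike in the critical case where all constants depend only on $n$, here they depend additionally on $\underline{P}$ because the relevant Lebesgue exponents cluster near $\underline{P}/(\underline{P}-1)$ and $\underline{P}$ rather than at universal values, and this dependence dictates how small $\overline{P} - \underline{P}$ must be chosen.
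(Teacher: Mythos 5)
Your proposal is correct and follows essentially the same route the paper intends: Theorem \ref{th.highsubcritical} is obtained by rerunning the proof of Theorem \ref{th.SUregularity} with $n$ replaced by the baseline $\underline{P}$ (Hodge decomposition at $r=\underline{P}$, $\e=\underline{P}-p$, conjugate pair $\underline{P}/(p-1)$, $\underline{P}/(\underline{P}-p+1)$, uniform CMLS and BMO cut-off at exponent $\underline{P}$), with all terms measured through the Morrey quantity $\Psi(x,R)=R^{\underline{P}-n}D_{\underline{P}}(u_p;B_R(x))$ and constants now depending on $\underline{P}$. The only slip is the stated scaling of the right-hand side, which should be $R^{-(\underline{P}-n)}\Psi(x,4R)^{(\underline{P}+1)/\underline{P}}$ rather than $R^{-(\underline{P}-n)/\underline{P}}\Psi(x,4R)^{(\underline{P}+1)/\underline{P}}$; this is harmless, since after multiplying by $R^{\underline{P}-n}$ the powers of $R$ cancel exactly and the term is still absorbed by the smallness assumption.
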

Observe that, even though $\underline{P}>n$ implies a-priori the applicability of Sobolev's embedding, this is not enough to deduce a Caccioppoli-type inequality and bootstrap to the higher order $C^{1,\alpha}$-regularity as the maps $(u_p)$ do not belong to the energy space compare with Colombo-Tione's result from \cite{col} already recalled in the Introduction.

\end{document}